\documentclass[12pt]{article}

\usepackage{setspace}
%\doublespacing
\usepackage{a4wide}

\usepackage{amsthm, amssymb, amstext}
\usepackage[fleqn]{amsmath}
\usepackage{latexsym}
\usepackage[dvips]{graphicx}
\usepackage{comment}
\usepackage{mathtools}
\usepackage{enumerate} 

\usepackage{todonotes}
\usepackage{comment}

\newtheorem{theorem}{Theorem}

\newtheorem{corollary}{Corollary}
\newtheorem{conjecture}{Conjecture}

\newtheorem{claim}{Claim}

\theoremstyle{definition}

\usepackage{amsthm}

\title{Three remarks on $\mathbf{W_2}$ graphs}
\author{Carl Feghali\thanks{Univ Lyon, EnsL, CNRS, LIP, F-69342, Lyon Cedex 07, France, email: \texttt{carl.feghali@ens-lyon.fr}} \\ \and Malory Marin\thanks{EnsL, LIP, F-69342, Lyon Cedex 07, France, email : \texttt{malory.marin@ens-lyon.fr}}}

\date{}

\begin{document}
\maketitle

\begin{abstract}
Let $k \geq 1$. A graph $G$ is $\mathbf{W_k}$ if for any $k$ pairwise disjoint independent vertex subsets $A_1, \dots, A_k$ in $G$, there exist $k$ pairwise disjoint maximum independent sets $S_1, \dots, S_k$ in $G$ such that $A_i \subseteq S_i$ for $i \in [k]$. Recognizing $\mathbf{W_1}$ graphs is coNP-hard, as shown by Chv\'atal and Slater (1993) and, independently, by Sankaranarayana and Stewart (1992). Extending this result and answering a recent question of Levit and Tankus, we show that recognizing $\mathbf{W_k}$ graphs is coNP-hard for $k \geq 2$. On the positive side, we show that recognizing $\mathbf{W_k}$ graphs is, for each $k\geq 2$, FPT parameterized by clique-width and by tree-width. Finally,  we construct graphs $G$ that are not $\mathbf{W_2}$ such that, for every vertex $v$ in $G$ and every maximal independent set $S$ in $G - N[v]$, the largest independent set in $N(v) \setminus S$ consists of a single vertex, thereby refuting a conjecture of Levit and Tankus. 
 \end{abstract}

\section{Introduction}

In a graph, an \emph{independent set} is a set of pairwise non-adjacent vertices. An independent set is said to be \emph{maximum} if it is of maximum size, and \emph{maximal} if it is not a subset of any other independent set. 

For a positive integer $k$, a graph $G$ is $\mathbf{W_k}$ if for any $k$ pairwise disjoint independent vertex subsets $A_1, \dots, A_k$ in $G$, there exist $k$ pairwise disjoint maximum independent sets $S_1, \dots, S_k$ in $G$ such that $A_i \subseteq S_i$ for $i \in [k]$ \cite{staples1979some}. A graph that is $\mathbf{W_1}$ is also commonly called \emph{well-covered}. Phrased differently, a graph is \emph{well-covered} if every maximal independent set of the graph is also maximum. Well-covered graphs have been extensively studied; see \cite{brown, chvatal, finbow, plummer,  sank} for some examples. 

 Chv\'atal and Slater \cite{chvatal} and, independently, Sankaranarayana and Stewart \cite{sank} demonstrated that recognizing well-covered (or $\mathbf{W_1}$) graphs is coNP-complete. However, the complexity of recognizing $\mathbf{W_k}$ graphs for each $k \geq 2$ has remained open, despite this class of graphs being introduced around the same time as the class of well-covered graphs (see the survey by Plummer \cite{plummer} for some references), and this was raised explicitly by Levit and Tankus \cite{levit}. It is worthwhile noting that a graph $G$ is $\mathbf{W_2}$ if and only if $G$ is $1$-well-covered (that is, a well-covered graph that remains well-covered after the removal of any one vertex) and has no isolated vertices \cite{staples1979some}. The class of $1$-well-covered graphs has received considerable attention; see for example \cite{hartnell, levit1}. 

Our first contribution is to answer this question.

\begin{theorem}\label{thm:wk}
For each $k \geq 2$, recognizing $\mathbf{W_k}$ graphs is coNP-hard. 
\end{theorem}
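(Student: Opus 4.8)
The plan is to give a polynomial-time reduction from the recognition of well-covered (that is, $\mathbf{W_1}$) graphs, which is coNP-hard by Chv\'atal--Slater and Sankaranarayana--Stewart, to the recognition of $\mathbf{W_k}$ graphs. Given an input graph $G$, I would form the lexicographic product $G[K_k]$: replace each vertex $v$ of $G$ by a clique $Q_v$ on $k$ vertices, and for every edge $uv \in E(G)$ add all edges between $Q_u$ and $Q_v$. This graph has $k\lvert V(G)\rvert$ vertices and is constructible in polynomial time for fixed $k$, and it has no isolated vertices since each clique has size $k \geq 2$. The whole proof then rests on the equivalence: $G[K_k]$ is $\mathbf{W_k}$ if and only if $G$ is well-covered.

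First I would record the basic correspondence between independent sets. An independent set of $G[K_k]$ selects at most one vertex from each clique $Q_v$, and the set of indices $v$ whose clique is hit forms an independent set of $G$; conversely any independent set of $G$, together with a choice of one representative per clique, yields an independent set of $G[K_k]$. Consequently $\alpha(G[K_k]) = \alpha(G)$, the maximal independent sets of $G[K_k]$ project exactly onto the maximal independent sets of $G$, and the maximum ones onto the maximum independent sets of $G$. In particular $G[K_k]$ is well-covered if and only if $G$ is. Since every $\mathbf{W_k}$ graph is $\mathbf{W_1}$ (take the remaining $k-1$ sets empty), this settles the easy direction: if $G$ is not well-covered then $G[K_k]$ is not well-covered, hence not $\mathbf{W_k}$.

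The substance, and the step I expect to be the main obstacle, is the converse: if $G$ is well-covered then $G[K_k]$ is $\mathbf{W_k}$. Given pairwise disjoint independent sets $A_1, \dots, A_k$ of $G[K_k]$, let $T_i$ be the projection of $A_i$ to $V(G)$. The crucial property of well-covered graphs is that every independent set extends to a \emph{maximum} one (extend greedily to a maximal independent set, which is maximum by hypothesis), so each $T_i$ extends to a maximum independent set $I_i$ of $G$. I would then lift each $I_i$ to a maximum independent set $S_i$ of $G[K_k]$ by assigning representatives clique by clique: inside a clique $Q_v$, the indices $i$ with $v \in T_i$ already carry the pairwise distinct representatives prescribed by the $A_i$, and the indices $i$ with $v \in I_i \setminus T_i$ receive distinct fresh representatives; this is possible because the number of indices $i$ with $v \in I_i$ is at most $k = \lvert Q_v\rvert$, so capacity is never exceeded. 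Since disjointness of the $S_i$ and independence of each $S_i$ impose only within-clique constraints, these per-clique assignments combine into pairwise disjoint maximum independent sets with $A_i \subseteq S_i$, witnessing $\mathbf{W_k}$. The only delicate points are verifying that the projection correspondence is exact, so that maximality and maximum size transfer both ways, and that greedy extendability is precisely what lets the $T_i$ grow into maximum sets before representatives are assigned; once these are in place the representative bookkeeping is a routine capacity count, with $k$ being exactly the clique size needed to accommodate up to $k$ disjoint sets.
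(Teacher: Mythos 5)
Your proposal is correct and matches the paper's proof in all essentials: the same reduction via the lexicographic product $G \cdot K_k$, the same projection correspondence giving $\alpha(G) = \alpha(G \cdot K_k)$, and your per-clique assignment of representatives (fixed ones for the $A_i$, fresh distinct ones for the extensions) is exactly the paper's labeling rule~($\star$). The only cosmetic difference is the easy direction, where you route through ``$\mathbf{W_k}$ implies well-covered'' plus transfer of well-coveredness, while the paper lifts an independent set of $G$ into $H$ and projects back; both rest on the same projection claim.
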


We prove Theorem \ref{thm:wk} in Section \ref{sec:wk}.

In our next contribution, we consider the problem of recognizing $\mathbf{W_k}$ graphs from a parameterized perspective. An instance of a \emph{parameterized problem} is a pair $(x, k)$, where $x$ is a string encoding the input and $k$ is a parameter. A parameterized problem is said to be \emph{fixed-parameter-tractable} (FPT) if it can be solved in $f(k) \cdot |x|^{O(1)}$ for some computable function $f$. 

 In the 90's, Courcelle proved that if a graph problem can be formulated in \textit{monadic second-order logic} (MSO${}_{1}$), then it is FPT when parameterized by clique-width \cite{Courcelle1,Courcelle2,Courcelle3,Courcelle5,Courcelle6}. 
Furthermore, Courcelle \cite{Courcelle4} showed that every graph problem definable in LinEMSOL (an extension of MSO${}_1$ with the possibility of optimization with respect to some linear evaluation function) is also FPT when parameterized by \textit{clique-width}. We remark that the clique-width of a graph $G$ is defined as the minimum number of labels required to construct G using only the following four operations. 
\begin{itemize}
\item Creating a new vertex with label $i$; 
\item taking the disjoint union of two labeled graphs;
\item connecting all the vertices having label $i$ to all the vertices having label $j$ with $i \neq j$; 
\item renaming the vertices with label $i$ to label $j$.
\end{itemize}
We also note that deciding whether or not a given graph has bounded clique-width is NP-complete \cite{fellows2009clique}.

Now, using Courcelle's theorem, Alves \emph{et al.} \cite{Alves} showed that recognizing well-covered graphs is FPT when parameterized by clique-width. In our next contribution, we extend this result to $\mathbf{W_k}$  graphs via the same technique. 

\begin{theorem}\label{thm:FPT}
For each $k\geq 2$, recognizing $\mathbf{W_k}$ graphs is FPT when parameterized by clique-width.
\end{theorem}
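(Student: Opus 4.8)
The plan is to express the property of being $\mathbf{W_k}$ in a logical formalism to which Courcelle's theorem applies, and then invoke the clique-width version of that theorem (the LinEMSOL result \cite{Courcelle4}, as used by Alves \emph{et al.} \cite{Alves} for well-covered graphs). The routine ingredients are all definable in MSO${}_1$: for a vertex-set variable $X$, the predicate ``$X$ is independent'' is $\forall u\forall v\,(u\in X \wedge v\in X \to \neg\,\mathrm{adj}(u,v))$; disjointness $X\cap Y=\emptyset$, containment $X\subseteq Y$, and ``$X$ is a maximal independent set'' are equally immediate. Since $k$ is a fixed constant, I would introduce the $2k$ free set variables $A_1,\dots,A_k,S_1,\dots,S_k$ and the MSO${}_1$ formula $\phi_k$ asserting that all of them are independent, that the $A_i$ are pairwise disjoint, that the $S_i$ are pairwise disjoint, and that $A_i\subseteq S_i$ for each $i$. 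As a preliminary step I would also compute $\alpha(G)$ as a single LinEMSOL maximization of $|X|$ over independent $X$, which is FPT in clique-width.

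The conceptual heart of the proof is the following reformulation. For any valid adversary choice $(A_1,\dots,A_k)$ there always exists at least one completion (take $S_i=A_i$), and every completion satisfies $\sum_i|S_i|\le k\,\alpha(G)$ because each $|S_i|\le\alpha(G)$; moreover equality holds precisely when each $S_i$ is a maximum independent set and the $S_i$ are pairwise disjoint. Hence I would prove the equivalence
\[ G \text{ is } \mathbf{W_k} \iff \min_{(A_1,\dots,A_k)}\ \max_{(S_1,\dots,S_k)}\ \sum_{i=1}^{k}|S_i| \;=\; k\,\alpha(G), \]
where the inner maximum ranges over all pairwise disjoint independent $S_1,\dots,S_k$ with $A_i\subseteq S_i$, and the outer minimum ranges over all pairwise disjoint independent $A_1,\dots,A_k$. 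Deciding whether $G$ is $\mathbf{W_k}$ therefore amounts to computing a single min--max value of a \emph{linear} (cardinality) objective over the MSO${}_1$-definable relation $\phi_k$, and comparing it to $k\,\alpha(G)$.

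It remains to argue that this min--max value is computable in FPT time parameterized by clique-width. Because $k$ is fixed, an assignment of the $2k$ set variables is a colouring of the vertices by one of $2^{2k}$ colours, a constant, and the feasibility constraint $\phi_k$ is MSO${}_1$ while the objective $\sum_i|S_i|$ is additive over vertices. Along a given $c$-expression, both the feasibility type and the objective decompose over the operations (disjoint union, join, relabel), so the value can be obtained by a dynamic program over the expression tree whose tables are indexed by the bounded set of types; the adversarial ($A$) colours are minimized and the responder ($S$) colours are maximized, and these commute correctly with the tree operations since the type fully determines feasibility and the objective is additive. This is exactly the bounded-alternation, optimization-augmented form of Courcelle's theorem, and I would present it either by citing an extension of the LinEMSOL evaluation of \cite{Courcelle4} to min--max objectives or by spelling out the automaton/DP directly. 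Finally, since bounded tree-width implies bounded clique-width, the tree-width statement follows as a special case.

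The main obstacle is precisely the quantifier alternation $\forall(A_i)\,\exists(S_i)$ combined with the maximality condition. One cannot simply write a fixed MSO${}_1$ sentence using a predicate ``$S$ is a maximum independent set'', because maximality means $|S|=\alpha(G)$ and $\alpha(G)$ is unbounded in $|G|$, so it cannot be hard-coded into a single formula; nor is a single LinEMSOL optimization sufficient, since the adversary's choice of the $A_i$ precedes the choice of the $S_i$. The work is thus to justify that the genuine min--max of the cardinality objective propagates correctly through the clique-width DP, which I expect to be the step requiring the most care.
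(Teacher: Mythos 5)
Your min--max reformulation is correct: one can check that $G$ is $\mathbf{W_k}$ if and only if $\min_{(A_i)}\max_{(S_i)}\sum_i|S_i|=k\,\alpha(G)$ with the ranges you specify, and the auxiliary MSO${}_1$ predicates are all fine. The genuine gap is the algorithmic step you defer to the end. The theorem of \cite{Courcelle4} used by Alves \emph{et al.}~\cite{Alves} (LinEMSOL) permits a \emph{single} block of optimization: a min \emph{or} a max of a linear objective over the satisfying assignments of one MSO${}_1$ formula. It does not permit a min nested outside a max, and there is no ``bounded-alternation, optimization-augmented form of Courcelle's theorem'' in the literature that you could cite for this. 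The dynamic-programming sketch you offer in its place is flawed at exactly the point you gloss over: it is not true that ``the adversarial colours are minimized and the responder colours are maximized, and these commute correctly with the tree operations.'' The responder's optimal completion inside one subtree of the clique-width expression depends, via the automaton state, on the adversary's choices made in \emph{other} subtrees, so the two optimizations cannot be carried out locally and then combined. The natural repair is to store, for each subtree and each adversary colouring, the profile mapping automaton states to the best responder weight, and to identify adversary colourings with equal profiles; but the number of such profiles is of order $n^{|Q|}$, where the number of states $|Q|$ depends on the parameter, so this yields an XP algorithm, not an FPT one. The heart of the theorem is therefore left unproved.

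The paper sidesteps the alternation-plus-cardinality problem with a trick your write-up does not find, even though your last paragraph correctly identifies the obstacle. Since $\mathbf{W_k}$ implies well-covered (extend a maximal independent set $A_1=M$, $A_2=\dots=A_k=\emptyset$; the extension of $M$ must be $M$ itself), one may first test well-coveredness separately, which is a single LinEMSOL computation (is $\alpha(G)=\beta(G)$, the sizes of the largest and smallest maximal independent sets?). Under that hypothesis, ``maximum independent set'' may be replaced by \emph{maximal} independent set in the inner existential quantifier, and maximality, unlike maximumness, is a plain MSO${}_1$ predicate. The alternating part then becomes an ordinary MSO${}_1$ sentence $\forall X_1,\dots,X_k\,\exists X_1',\dots,X_k'\,(\cdots)$ with no cardinality constraints at all, so Courcelle's model-checking theorem applies directly, with no optimization inside the quantifier alternation. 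The same observation would rescue your route: under well-coveredness, your inner max equals $k\,\alpha(G)$ exactly when a completion by pairwise disjoint \emph{maximal} independent sets exists, which collapses your min--max identity to a pure $\forall\exists$ MSO${}_1$ check plus one LinEMSOL test.
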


We prove Theorem \ref{thm:FPT} in Section \ref{sec:fpt}. 

Let us note that Courcelle's first theorem was about MSO${}_2$ logic (which allows quantification over edges) and about \emph{tree-width} instead of clique-width \cite{Courcelle3}.  Since it is well-known that any graph with bounded clique-width has bounded tree-width, this immediately yields

\begin{corollary}
For each $k\geq 2$, recognizing $\mathbf{W_k}$ graphs is FPT when parameterized by tree-width.
\end{corollary}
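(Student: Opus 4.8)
The plan is to obtain the corollary as an immediate consequence of Theorem~\ref{thm:FPT}, and I would favour the route that bypasses any explicit computation of a clique-width expression. The proof of Theorem~\ref{thm:FPT} expresses the property of being $\mathbf{W_k}$ by a formula $\varphi_k$ in monadic second-order logic, where the \emph{maximality} of the independent sets $S_1,\dots,S_k$ is enforced through a linear optimization term (so that $\varphi_k$ lives in LinEMSOL rather than in plain MSO${}_1$). Since MSO${}_1$ is a syntactic fragment of MSO${}_2$, the formula $\varphi_k$ is also an admissible MSO${}_2$ formula. By Courcelle's original theorem \cite{Courcelle3}, together with its optimization extension, every problem definable in this way is FPT when parameterized by tree-width. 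Applying this to $\varphi_k$ decides whether an input graph is $\mathbf{W_k}$ in time $f(w)\cdot n^{O(1)}$, where $w$ is the tree-width, which is exactly the claim.

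Alternatively, one can reduce to Theorem~\ref{thm:FPT} through the comparison of the two width parameters, and here the \emph{direction} of the inequality is the point to be careful about: it is bounded tree-width that implies bounded clique-width (indeed $\mathrm{cw}(G)\le 3\cdot 2^{\,\mathrm{tw}(G)-1}$ by Corneil and Rotics), and not the reverse, since complete graphs have clique-width $2$ but unbounded tree-width. Thus a class of bounded tree-width has bounded clique-width, and any algorithm that is FPT in the clique-width is \emph{a fortiori} FPT in the tree-width. To make this constructive, one first computes in FPT time a tree decomposition of bounded width via Bodlaender's algorithm, transforms it into a clique-width expression on at most $3\cdot 2^{\,w-1}$ labels by the standard construction, and then runs the algorithm of Theorem~\ref{thm:FPT} on that expression.

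The only genuinely delicate point common to both routes is the treatment of \emph{maximum} (as opposed to merely maximal) independent sets: plain Courcelle's theorem decides MSO-definable properties, whereas certifying that each $S_i$ has maximum size requires a cardinality comparison that is not MSO-expressible on its own, and hence calls for the optimization variant of the theorem. Since this optimization machinery is already what underlies the proof of Theorem~\ref{thm:FPT}, no new obstacle arises, and the corollary follows.
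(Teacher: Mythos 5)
Your proof is correct, and both of your routes are essentially what the paper has in mind: the paper's one-sentence justification of this corollary invokes Courcelle's original MSO${}_2$/tree-width theorem together with the comparison between the two width parameters, and then appeals to Theorem~\ref{thm:FPT}. In fact your write-up is more careful than the paper's on the one point that matters. The paper asserts that ``any graph with bounded clique-width has bounded tree-width,'' which is false as stated (complete graphs have clique-width $2$ and unbounded tree-width); the implication runs the other way, exactly as you say, with $\mathrm{cw}(G)\le 3\cdot 2^{\,\mathrm{tw}(G)-1}$ by Corneil and Rotics, and it is this corrected direction that makes the reduction to Theorem~\ref{thm:FPT} legitimate. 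One small quibble with your first route: in the proof of Theorem~\ref{thm:FPT}, maximality of the sets $X_i'$ is expressed in plain MSO${}_1$ (every vertex outside $X_i'$ must have a neighbour in $X_i'$); the LinEMSOL optimization is used only for the well-coveredness test, namely to compute and compare the minimum and maximum sizes of maximal independent sets. You in effect correct this yourself in your final paragraph when you distinguish maximal from maximum, so nothing in the argument is affected.
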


We remark that  the notion of tree-width was introduced by Robertson and Seymour \cite{robertson1986graph}. Let $G=(V,E)$ be a graph. A \textit{tree decomposition} of $G$ is a tree $T$ whose nodes $X_t$ are subsets of $V$ and such that
\begin{itemize}
\item for each edge $(u,v)\in E$, there exists a node $t\in V(T)$ such that $u,v\in X_t$, and
\item for each vertex $v\in V(G)$, the set of nodes $t$ of $V(T)$ such that $X_t$ contains $v$ induces a nonempty connected subtree of $T$.
\end{itemize}
The width of a tree decomposition of $G$ is $\max_{t\in V(T)} |X_t|-1$, and the tree-width of $G$ is the minimum width over all tree decompositions of $G$.

As our final contribution, we refute, in Section \ref{sec:c}, the following conjecture of Levit and Tankus \cite{levit} that attempts to characterize $\mathbf{W_2}$ graphs. 

\begin{conjecture}\label{conj}
A graph $G$ is $\mathbf{W_2}$ if and only if for every vertex $v$ in $G$ and every maximal independent set $S$ in $G - N[v]$, the largest independent set in $N(v) \setminus N(S)$ consists of a single vertex.
\end{conjecture}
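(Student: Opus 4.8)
The condition in Conjecture~\ref{conj} is necessary for a graph to be $\mathbf{W_2}$, so the only way to refute the conjecture is to exhibit a graph $G$ that satisfies the condition yet fails to be $\mathbf{W_2}$. I would first make the necessity explicit, since it pins down exactly what the counterexample must do. Given a vertex $v$ and a maximal independent set $S$ of $G - N[v]$, the set $S \cup \{v\}$ is a maximal independent set of $G$, and any independent set of size two inside $N(v) \setminus N(S)$ would, together with $S$, form an independent set strictly larger than $S \cup \{v\}$; hence well-coveredness alone already forbids the independence number of $N(v) \setminus N(S)$ from exceeding one. For the lower bound, if $N(v) \setminus N(S)$ were empty then $S$ would be a maximal independent set of $G - v$ of size $|S| = \alpha(G) - 1$, whereas the $\mathbf{W_2}$ property (applied to $A_1 = \{v\}$ and $A_2 = \emptyset$) produces a maximum independent set avoiding $v$, so that $G - v$ has a maximal independent set of size $\alpha(G)$, contradicting the well-coveredness of $G - v$. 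Thus only the \emph{sufficiency} of the condition is in question.

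The conceptual reason to expect sufficiency to fail is that the condition is purely local and single-rooted: it inspects one distinguished vertex $v$ at a time, whereas being $\mathbf{W_2}$ is a statement about \emph{pairs} of disjoint independent sets that must be extended simultaneously. A graph can pass every single-vertex test while still harbouring two disjoint independent sets $A_1, A_2$ that compete for the same vertices in every pair of maximum independent sets extending them. My plan is therefore to search for a graph $G$, most naturally one that is well-covered but not $1$-well-covered and has no isolated vertices, in which this global conflict is present but invisible to the local check.

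Concretely, I would look for a small, highly symmetric graph (by hand, by composing well-covered gadgets, or by computer search over well-covered graphs), and then verify the two halves separately. Showing that $G$ is not $\mathbf{W_2}$ is the easy half: it suffices to produce two disjoint independent sets $A_1, A_2$ for which no disjoint pair of maximum independent sets $S_1 \supseteq A_1$, $S_2 \supseteq A_2$ exists, or equivalently to exhibit a single vertex $v$ for which $G - v$ fails to be well-covered while $G$ itself is well-covered. The delicate half is verifying that the condition of Conjecture~\ref{conj} holds for \emph{every} vertex $v$ and \emph{every} maximal independent set $S$ of $G - N[v]$; this is the main obstacle, since it is an exhaustive claim over many cases. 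I would control it by using the automorphism group of $G$ to reduce to one representative $v$ per orbit, and then, for each such $v$, enumerate the maximal independent sets of $G - N[v]$ and check in every case that the vertices of $N(v)$ having no neighbour in $S$ form a set whose largest independent subset is a single vertex. Assembling these two verifications yields a graph that satisfies the hypothesis of the conjecture but is not $\mathbf{W_2}$, refuting it.
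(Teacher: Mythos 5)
Your proposal has a genuine gap, and in fact two. First, it never actually refutes anything: what you give is a search plan (pick a symmetric candidate, quotient by automorphisms, enumerate maximal independent sets), but a refutation of Conjecture~\ref{conj} consists of a concrete graph together with the verification of both halves, and that content is absent. Your necessity argument is essentially sound, but it is not the issue; only sufficiency is in question, as you yourself note. Second, and more fatally, the search you describe is aimed at a provably empty space: you propose to look among graphs that are \emph{well-covered} (but not $1$-well-covered), yet Levit and Tankus proved the conjecture under the extra assumption that $G$ is well-covered --- a fact recorded in this paper immediately after the conjecture is stated. Hence every well-covered graph satisfying the local condition \emph{is} $\mathbf{W_2}$, and no counterexample of the kind you are hunting for exists. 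The hole in the conjecture is elsewhere: its condition does not even force well-coveredness, so the counterexample must be a graph that is not well-covered at all.

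That is exactly what the paper exploits. Let $A$ be an $s$-matching and $B$ a $t$-matching with $2 \le s < t$, and let $G$ be obtained by adding all edges between $A$ and $B$. Picking one endpoint of each edge of $A$ gives a maximal independent set of size $s$, and doing the same in $B$ gives one of size $t$, so $G$ is not well-covered and therefore not $\mathbf{W_2}$. On the other hand, for any vertex $v$ (say $v \in A$) and any maximal independent set $S$ of $G - N[v]$, the graph $G - N[v]$ is the $(s-1)$-matching $A - \{v, \mathrm{sibling}(v)\}$, so $S$ is nonempty and lies in $A$, whence $N(S)$ contains all of $B$; consequently $N(v) \setminus N(S)$ is exactly $\{\mathrm{sibling}(v)\}$, a single vertex, and the condition of the conjecture holds. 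The lesson relative to your plan: the conjecture fails not because the single-rooted local test misses a subtle global conflict inside well-covered graphs, but because the test cannot even detect the failure of well-coveredness itself.
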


We should remark that, in their same paper, they proved the conjecture under the extra assumption that $G$ is well-covered.

\section{The proof of Theorem \ref{thm:wk}}\label{sec:wk}

In this section, we prove Theorem \ref{thm:wk}. 

First, we require a definition. The \emph{lexicographic product} $G_1\cdot G_2$ of two vertex disjoint graphs $G_1$ and $G_2$ is the graph with vertex set $V(G_1)\times V(G_2)$ and edge set $\{(u,v)(x,y) : ux \in E(G_1) \mbox{ or } u = x \mbox{ and } vy \in E(G_2)\}$.

\begin{proof}[Proof of Theorem \ref{thm:wk}]
We construct a graph $H$ from an arbitrary graph $G$ in polynomial time such that $G$ belongs to $\mathbf{W_1}$ if and only if $H$ belongs to $\mathbf{W_k}$. Since recognizing $\mathbf{W_1}$ graphs is coNP-hard\cite{chvatal}, this will imply the theorem.

Let $H =  G \cdot K_k$ be the lexicographic product of $G$ and the complete graph $K_k$ on $k$ vertices.  For any $v\in \text{V}(G)$, let $K^v =   \{v\} \times \text{V}(K_k)$, and for  any $A \subseteq V(H)$, let $p_G(A) = \{u \in V(G): (u, v) \in A\}$ denote the \emph{projection of $A$ on $G$}.   We begin with a  simple claim that guarantees that any independent set in $H$ projects to an independent set in $G$ having the same cardinality.

\begin{claim}\label{claim:1}
For any independent set $A$ of $H$, $p_G(A)$ is an independent set of $G$ of size $|A|$.
\end{claim}
\begin{proof}
Suppose there are two different vertices $u_1$ and $u_2$ in $p_G(A)$ such that $u_1u_2\in E(G)$. By definition, $A$ contains vertices $(u_1, x)$ and $(u_2, y)$ for some (not necessarily distinct) $x, y \in V(K_k)$ and these must, by definition, be adjacent, which contradicts that $A$ is independent. Therefore, $p_G(A)$ is independent. 

Now, by definition $ |A| \geq |p_G(A)| $. To see that $|p_G(A)| = |A|$, note that since $A$ independent, if  $(u, v_1) \in A$  and $(u, v_2) \in A$ then $v_1v_2 \in E(K_k)$ implies $(u,v_1)(u,v_2) \in E(H)$, a contradiction. The claim is proved. 
\end{proof}

We further establish that the maximum independent sets of $G$ and $H$ have the same cardinality.
\begin{claim}\label{claim:2}
$\alpha(G)= \alpha(H)$.
\end{claim}

\begin{proof}
For any independent set $A$ in $G$ and vertex $u \in V(K_k)$, by definition, the set $A \times \{u\}$ forms an independent set in $H$; thus, $\alpha(H) \geqslant \alpha(G)$. And for any independent set $A$ in $H$,  the projection $p_G(A)$ is, by Claim \ref{claim:1}, an independent set in $G$ of cardinality $|A|$  and so $\alpha(G) \geqslant \alpha(H)$.
\end{proof}

We now show that $H$ is $\mathbf{W_k}$, assuming $G$ is $\mathbf{W_1}$.  Let $A_1, \ldots, A_k$ be disjoint independent vertex sets in $H$. For each $v\in V(G)$, we (re)label the vertices of $K^v$ as $a_1^v, \dots, a_k^v$ arbitrarily unless 
\begin{itemize}
\item[($\star$)] whenever $K^v \cap A_i \not= \emptyset$ for some $i \in [k]$, then the unique vertex $(v, u) \in K^v \cap A_i$ for some $u \in K_k$ is labelled as $a_i^v$. 
\end{itemize}

Now, by Claim \ref{claim:1}, $p_G(A_i)$ is, for $i \in [k]$, independent in $G$  and, as $G$ is $\mathbf{W_1}$, can be extended to a maximum independent set $T_i$ in $G$. It follows that the sets $\{ a_i^u \mid u \in T_i\}$ for $i \in [k]$ are independent in $H$ and of size $\alpha(G)$ and thus maximum by Claim \ref{claim:2} and, by ($\star$), pairwise disjoint. Thus, $H$ is $\mathbf{W_k}$.

To see that $G$ is $\mathbf{W_1}$, assuming $H$ is $\mathbf{W_k}$, consider any independent set $A$ in $G$, and fix some $u\in V(K_k)$. The set $A \times \{u\}$ forms an independent set in $H$ and, as $H$ is $\mathbf{W_k}$, extends to a maximum independent set $T$ in $H$. Consequently, $p_G(T)$ is an independent set in $G$ that contains $A$ and of maximum size size by Claim \ref{claim:2}. Thus, $G$ is $\mathbf{W_1}$. This concludes the proof.
\end{proof}

Recall that the class of $\mathbf{W_2}$ graphs is precisely the class of 1-well-covered graphs without any isolated vertices. Consequently 

\begin{corollary}
Recognizing $1$-well-covered graphs is coNP-hard.
\end{corollary}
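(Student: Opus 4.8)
The plan is to reuse the reduction already built in the proof of Theorem~\ref{thm:wk}, specialized to $k = 2$. Recall from the introduction that a graph is $\mathbf{W_2}$ if and only if it is $1$-well-covered and has no isolated vertices. The two conditions differ only by the requirement that there be no isolated vertices, and this requirement is trivially checkable in polynomial time; the delicate point is therefore to make sure that the coNP-hardness of $\mathbf{W_2}$-recognition transfers to $1$-well-covered-recognition despite this gap. The cleanest way to do this is to exhibit a single many-one reduction whose output never has isolated vertices, so that on its image the two notions coincide.

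First I would take an arbitrary graph $G$ and form $H = G \cdot K_2$, exactly as in the proof of Theorem~\ref{thm:wk} with $k = 2$; this is computable in polynomial time. By that theorem, $G$ is $\mathbf{W_1}$ if and only if $H$ is $\mathbf{W_2}$. The key observation is that $H$ has no isolated vertices at all: each vertex $(v, u)$ of $H$ lies in the copy $K^v$, which is a clique on two vertices, so $(v,u)$ is adjacent to the other vertex of $K^v$. Hence the condition ``no isolated vertices'' holds automatically for every graph produced by this construction, and on these instances being $\mathbf{W_2}$ coincides exactly with being $1$-well-covered.

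Combining the two facts gives that $G$ is $\mathbf{W_1}$ if and only if $H$ is $1$-well-covered. Since $H$ is computable from $G$ in polynomial time and recognizing $\mathbf{W_1}$ graphs is coNP-hard, this exhibits a polynomial-time many-one reduction from $\mathbf{W_1}$-recognition to $1$-well-covered-recognition, yielding the corollary.

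The only real obstacle is the mismatch between $\mathbf{W_2}$ and $1$-well-covered caused by isolated vertices, and the whole point is that the lexicographic product $G \cdot K_2$ disposes of it for free: every vertex automatically acquires a neighbour inside its own $K_2$-fibre. Consequently no extra gadget is needed to forbid isolated vertices, and the argument reduces to the single structural observation above.
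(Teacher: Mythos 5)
Your proposal is correct and matches the paper's approach: the paper likewise derives the corollary from Theorem~\ref{thm:wk} (with $k=2$) together with the fact that $\mathbf{W_2}$ graphs are exactly the $1$-well-covered graphs without isolated vertices. You merely spell out explicitly the step the paper leaves implicit, namely that the reduction's output $H = G \cdot K_2$ never has isolated vertices, so the two notions coincide on its image.
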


\section{The proof of Theorem \ref{thm:FPT}}\label{sec:fpt}
In this section, we prove Theorem \ref{thm:FPT}.

 Before giving the details, let us briefly sketch the approach. We shall establish that testing whether a graph is well-covered and whether it is possible to extend a fixed number of pairwise disjoint independent sets in a graph into pairwise disjoint maximal independent sets is FPT in the clique-width of the graph. Since both these properties correspond to a characterization of $\mathbf{W_k}$ graphs (and this follows from the definitions), this will complete the proof.

\begin{proof}[Proof the Theorem \ref{thm:FPT}]
To prove the theorem, we express the problem in LinEMSOL and then apply Courcelle's theorem. In order to achieve this, we define the formula
\begin{equation*}
\mathbf{Wk} = \quad ~\forall_{X_1,\dots,X_k\subseteq V}
 ~\mathbf{indep}_k(X_1,\dots X_k)
 \end{equation*}
 which checks if $X_1, \dots, X_k$ are pairwise disjoint independent vertex subsets of $G$, and if so, whether there exist $k$ pairwise disjoint maximal independent sets $X'_1, \dots, X'_k$ such that $X_i \subseteq X'_i$ for $i \in [k]$. Here
\begin{align*}
\mathbf{indep}_k(X_1,\dots,X_k) &=  \left(\bigwedge_{1\leq i \leq k} \mathbf{indep}(X_i) ~\wedge \bigwedge_{1\leq i < j \leq k}\textbf{disjoint}(X_i,X_k)\right) \\ \\
& \Longrightarrow \left(\exists_{X_1',\dots, X_k'\subseteq V} \bigwedge_{1\leq i \leq k} \mathbf{subset}(X_i,X_i') \wedge ~\mathbf{maximal}(X_i') \right.\\
&\qquad \left.\wedge \bigwedge_{1\leq i <j \leq k} \mathbf{disjoint}(X_i',X_j') \right)
\end{align*}
 where $\mathbf{indep}(X) = \quad \forall_{u,v\in X} ~\neg \mathbf{adj}(u,v)$ checks whether $X$ is independent, 
$\textbf{maximal}(X) = \quad \mathbf{indep}(X) \wedge \forall_{v\notin X} \exists_{v\in X} \mathbf{adj}(u,v)$ checks whether $X$ is both maximal and independent,  
$\textbf{disjoint}(X,Y) = \quad \forall_{u\in X} \forall_{v\in Y} ~(u\neq v)$ checks whether $X$ and $Y$ are disjoint and, lastly, 
$\textbf{subset}(X,X') = \quad \forall_{x\in X} ~(x\in X')$ checks whether $X$ is a subset of $X'$. We have the following claim.

\begin{claim}\label{claim}
A graph $G$ is $\mathbf{W_k}$ if, and only if, $G$ is well-covered and $G\models  \mathbf{Wk}$.
\end{claim}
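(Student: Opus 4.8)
The plan is to prove the biconditional in Claim \ref{claim} by unpacking both the definition of $\mathbf{W_k}$ and the semantics of the formula $\mathbf{Wk}$, and then arguing that the ``maximal'' quantification in the formula can be upgraded to ``maximum'' precisely when $G$ is well-covered. By definition, $G$ is $\mathbf{W_k}$ if every family of $k$ pairwise disjoint independent sets $A_1,\dots,A_k$ extends to $k$ pairwise disjoint \emph{maximum} independent sets $S_1,\dots,S_k$ with $A_i\subseteq S_i$. The formula $\mathbf{Wk}$, on the other hand, asserts the same extension property but only demands that the $S_i$ be \emph{maximal} (this is what $\mathbf{maximal}(X_i')$ encodes). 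So the whole content of the claim is the interplay between ``maximal'' and ``maximum''.

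First I would prove the forward direction. Assume $G$ is $\mathbf{W_k}$. To see $G\models\mathbf{Wk}$, take any $k$ pairwise disjoint independent sets; being $\mathbf{W_k}$ they extend to pairwise disjoint \emph{maximum} independent sets, and since every maximum independent set is in particular maximal, the weaker requirement of $\mathbf{Wk}$ is satisfied, so $G\models\mathbf{Wk}$. To see that $G$ is well-covered, I would use the extension property applied to the single empty independent set (taking $k$ copies of $\emptyset$, or more carefully observing that $\mathbf{W_k}\Rightarrow\mathbf{W_1}$, since any one independent set can be taken as $A_1$ with $A_2,\dots,A_k=\emptyset$). Being $\mathbf{W_1}$ forces every maximal independent set to be extendable to a maximum one, hence to equal a maximum one, which is exactly well-coveredness. (Alternatively one invokes the known monotonicity $\mathbf{W_k}\Rightarrow\mathbf{W_{k-1}}\Rightarrow\cdots\Rightarrow\mathbf{W_1}$.)

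For the converse, assume $G$ is well-covered and $G\models\mathbf{Wk}$. Take any $k$ pairwise disjoint independent sets $A_1,\dots,A_k$. Since $G\models\mathbf{Wk}$, they extend to $k$ pairwise disjoint \emph{maximal} independent sets $S_1,\dots,S_k$ with $A_i\subseteq S_i$. Now the key step: because $G$ is well-covered, every maximal independent set is maximum, so each $S_i$ is in fact a \emph{maximum} independent set. Hence the $A_i$ extend to $k$ pairwise disjoint maximum independent sets, which is exactly the definition of $\mathbf{W_k}$. This completes the biconditional.

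I do not expect a genuine obstacle here, as the argument is essentially a definitional unfolding; the one point requiring care is the reduction of the ``maximum'' condition to a ``maximal'' condition definable in MSO${}_1$. The reason the formula can only speak of \emph{maximal} (a local, first-order-expressible condition: no vertex outside $X$ can be added) rather than \emph{maximum} (a global cardinality-comparison, not expressible in MSO${}_1$ over arbitrary graphs) is precisely why the well-coveredness hypothesis must be carried as a separate conjunct. The whole purpose of the claim is thus to justify splitting the $\mathbf{W_k}$ property into an MSO${}_1$-expressible piece ($\mathbf{Wk}$) and a separately-checkable piece (well-coveredness), so that Courcelle's theorem applies to the former while the latter is handled by the result of Alves \emph{et al.}~\cite{Alves}.
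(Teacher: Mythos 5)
Your proof is correct and follows essentially the same route as the paper: maximum implies maximal for the forward direction, and well-coveredness upgrades the maximal sets returned by $\mathbf{Wk}$ to maximum ones for the converse. The only difference is that you spell out why $\mathbf{W_k}$ implies well-covered (via $\mathbf{W_k}\Rightarrow\mathbf{W_1}$ with $A_2=\cdots=A_k=\emptyset$), a step the paper compresses to ``by definition''; this is a useful clarification but not a different argument.
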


\begin{proof}
Suppose first that $G$ is $\mathbf{W_k}$. Then, by definition, $G$ is well-covered. Let $X_1,...,X_k$ be pairwise disjoint independent vertex subsets of $G$. Since $G$ is $\mathbf{W_k}$, there exist pairwise disjoint maximal (in fact, maximum) independent vertex subsets $X_1',\dots X_k'\subseteq V$ such that $X_i \subseteq X_i'$ for $i \in [k]$. Thus $G\models \mathbf{Wk}$.

Conversely, suppose that $G$ is well-covered and $G\models \mathbf{Wk}$. Let $X_1,\dots,X_k$ be $k$ pairwise disjoint independent vertex subsets in $G$. Since $G\models \mathbf{Wk}$, there are $k$ pairwise disjoint maximal independent sets $X_1',\dots,X_k'$ such that  $X_i \subseteq X_i'$ for $i \in [k]$. But $G$ being well-covered implies each $X_i'$ is also maximum and thus $G$ is $\mathbf{W_k}$ as needed. 
\end{proof}

To complete the proof: since the property of being a maximal independent set is MSO${}_1$ by our formulas, given an $n$-vertex graph $G$ of clique-width $w$, we can compute the size $\beta(G)$ of the smallest maximal independent set and the size $\alpha(G)$ of the largest maximal independent set in time $f_1(w).n^{\mathcal{O}(1)}$ by Courcelle's theorem on LinEMSOL logic, where $f_1$ is a computable function. In particular, since  a graph $G$ is well-covered iff $\alpha(G) = \beta(G)$, deciding whether $G$ is well-covered can thus be done in time $f_1(w).n^{\mathcal{O}(1)}$.

On the other hand, we can decide if $G\models \mathbf{Wk}$ in time $f_2(w).n^{O(1)}$ using Courcelle's theorem on MSO${}_1$ logic, where $f_2$ is a computable function. %Note that the size of $\mathbf{Wk}$ increases with $k$, and therefore $f_2$ also increases with $k$.

Therefore, together with Claim \ref{claim}, there is an algorithm that decides if $G$ is $\mathbf{W_k}$ in time $f(w).n^{O(1)}$, where $f = f_1 + f_2$ completing the proof. 
\end{proof}

\section{A counterexample}\label{sec:c}

In this section, we refute Conjecture \ref{conj} by constructing the following counterexample.  Recall that, for a positive integer $k$, a \textit{$k$-matching} is a disjoint union of $k$ edges. Given a vertex of a $k$-matching, we call its unique neighbor its \emph{sibling}. 

Now, let $s$ and $t$ be any positive integers, $s < t$.  Let $A$ consist of a $s$-matching and $B$ consist of a $t$-matching. Consider the graph $G$ formed from $A$ and $B$ by adding all possible edges between $A$ and $B$. We claim that $G$ is a counterexample to Conjecture \ref{conj}.

Note that $G$ is not well-covered since it contains a maximal independent of size $s$ and a maximal independent set of size $t$.  In particular, $G$ is not $\mathbf{W_2}$. However, for any vertex $v$ in $G$ and any maximal independent set $S$ of $G- N[v]$ the set $N(v) \setminus N(S)$ consists of exactly the sibling of $v$, as needed.

 \bigskip

\noindent
\textbf{Acknowledgements.} We thank the referee for valuable comments that improved the presentation and for spotting an inaccuracy. Carl Feghali was supported by the French National Research Agency under research grant ANR DIGRAPHS ANR-19-CE48-0013-01

 \bibliography{bibliography}{}
\bibliographystyle{abbrv}
 
\end{document}